\theoremstyle{plain}
\newtheorem{thm}{Theorem}[section]
\newtheorem{prop}[thm]{Proposition}
\newtheorem{cor}[thm]{Corollary}
\newtheorem{rem}[thm]{Remark}
\newtheorem{exm}[thm]{Example}
\theoremstyle{definition}
\newtheorem{defin}[thm]{Definition}
\newcommand{\K}{\mathbb{K}}
\newcommand{\p}{\mathbb{P}}
\newcommand{\E}{\mathcal{E}}
\newcommand{\NN}{{\mathcal N}}
\DeclareMathOperator{\indeg}{indeg}
\DeclareMathOperator{\defect}{def}
\DeclareMathOperator{\Syz}{Syz}
\title{On the Hilbert vector of the Jacobian module of a plane curve}
\author{Armando Cerminara*}
\address{*Dipartimento Matematica ed Applicazioni ``Renato Caccioppoli", Universit\`a Degli Studi Di Napoli ``Federico II", Via Cinthia - Complesso Universitario Di Monte S. Angelo 80126 - Napoli - Italia}
\email{armando.cerminara@unina.it, giovanna.ilardi@unina.it}
\author[Alexandru Dimca]{Alexandru Dimca$^{1}$}
\address{Universit\'e C\^ ote d'Azur, CNRS, LJAD and INRIA, France}
\email{dimca@unice.fr}
\author{Giovanna Ilardi*}
\thanks{$^1$ This work has been partially supported by the French government, through the $\rm UCA^{\rm JEDI}$ Investments in the Future project managed by the National Research Agency (ANR) with the reference number ANR-15-IDEX-01}
\subjclass[2010]{Primary 14H50; Secondary  14B05, 13D02}
\keywords{Jacobian syzygy, Milnor algebra, Jacobian module, global Tjurina number, nodal curves, rational curves.}
\begin{document}
\maketitle

\begin{abstract}
We identify several classes of complex projective plane curves $C:f=0$, for which the Hilbert vector of the Jacobian module $N(f)$ can be completely determined, namely the 3-syzygy curves, the maximal Tjurina curves and the nodal curves, having only rational irreducible components. A result due to  Hartshorne, on the cohomology of some  rank 2 vector bundles on $\p^2$, is used to get a sharp lower bound for the initial degree of the Jacobian module $N(f)$, under a semistability condition.

   \end{abstract}

\section*{Introduction}
Let $S=\mathbb{C}[x,y,z]$ be the graded polynomial ring in three variables $x,y,z$ with complex coefficients. Let $C: f=0$ be a reduced curve of degree $d$ in the complex projective plane $\p^2$. We denote by $J_f=(f_x,f_y,f_z)$ the Jacobian ideal, i.e. the homogeneous ideal in $S$ spanned by the partial derivatives $f_x,f_y,f_z$ of $f.$ Since $C$ is reduced, the singular subscheme of $C$, which is defined by the Jacobian ideal $J_f$, is 0-dimensional, and its degree is denoted by $\tau(C)$, and is called the global Tjurina number of $C$.
Consider the graded $S-$module of Jacobian syzygies of $f$, namely $$\Syz(J_f)=\{(a,b,c)\in S^3 : af_x+bf_y+cf_z=0\}.$$
Let $mdr(f):=\min\{k : \Syz(J_f)_k\neq (0)\}$ be the minimal degree of a Jacobian syzygy for $f$; in this paper we will assume $mdr(f)\geq 1$ unless otherwise specified. In fact, if $mdr(f)=0$, the curve $C$ is a pencil of lines.\\
We say that  $C: f=0$ is a $m-$syzygy curve if the $S-$module $\Syz(J_f)$ is minimally generated by $m$ homogeneous syzygies, $r_1,r_2,\ldots,r_m$, of degree $d_i=\deg r_i$, ordered such that $$1\leq d_1\leq d_2\leq\ldots\leq d_m.$$ The multiset $(d_1,d_2,\ldots,d_m)$ is called the exponents of the plane curve $C$ and $\{r_1,r_2,\ldots,r_m\}$ is said to be a minimal set of generators for the $S-$module $\Syz(J_f).$ Some of the $m-$syzygy curves have been carefully studied.
We recall that:
\begin{itemize}
    \item a $2-$syzygy curve $C$ is said to be {\emph{free}},  since then the $S-$module $\Syz(J_f)$ is a free module of the rank $2$, see \cite{BGLH17, D17, DS17, S05, S06, ST14};
    \item a $3-$syzygy curve is said to be {\emph{nearly free}} when $d_3=d_2$ and $d_1+d_2=d$, see \cite{DS18, AD18, BGLH17, D17, DArX, MVArX};
    \item a $3-$syzygy line arrangement is said to be a {\emph{plus-one generated line arrangement}} of level $d_3$ when $d_1+d_2=d$ and $d_3 \geq d_2$, see \cite{AArX}. By extension, a $3-$syzygy curve $C$ is said to be a {\emph{plus-one generated curve}} of level $d_3$  when $d_1+d_2=d$ and $d_3 \geq d_2$, see \cite{DSIn}.
\end{itemize}

\par The Jacobian module of $f$, or of the plane curve $C:f=0$, is the quotient module $N(f)=\hat{J_f}/J_f$, with $\hat{J_f}$ the saturation of the ideal $J_f$ with respect to the maximal ideal $\mathbf{m}=(x,y,z)$ in $S$. The Jacobian module $N(f)$ coincides with $H^0_\mathbf{m}(S/J_f)$, see \cite{Se14}. Let $n(f)_j=\dim N(f)_j$,  $T=3(d-2)$ and recall that the Jacobian module $N(f)$ enjoys a weak Lefschetz type property, see \cite{DP16} for this result, and \cite{HMNW03, HMMNWW13, I18} for Lefschetz properties of Artinian algebras in general. More precisely, we have 
\begin{equation}\label{LPN} n(f)_0\leq n(f)_1\leq\ldots\leq n(f)_{\lfloor\frac{T}{2}\rfloor-1}\leq n(f)_{\lfloor\frac{T}{2}\rfloor}\geq n(f)_{\lfloor\frac{T}{2}\rfloor+1}\geq\ldots\geq n(f)_T.\end{equation}

 We consider the following two invariants for a curve $C:f=0$  \begin{eqnarray*}\sigma(C):=\min\left\{j : n(f)_j\neq 0\right\}=\indeg(N(f)), & \nu(C):=\max\left\{n(f)_j\right\}_j.\end{eqnarray*} The self duality of the graded $S-$module $N(f)$, see \cite{HS12, Se14, SW15}, implies that 
\begin{equation}\label{sym}  
 n(f)_j=n(f)_{T-j}, 
\end{equation}  
for any integer $j$, in particular $n(f)_k\neq 0$ exactly for $k=\sigma(C),\ldots,T-\sigma(C).$\\ 

The main aim of this paper is to identify classes of curves $C:f=0$ for which the Hilbert vector $(n(f)_j)$ of the Jacobian module $N(f)$ can be completely determined. In \cite[Theorem 3.1, Theorem 3.2]{DSInB}, recalled below in Theorem \ref{THMD}, there is a description of the dimensions
$n(f)_j$
for a certain range of $j$. Moreover, in \cite[Theorem 3.9, Corollary 3.10]{DSIn}, recalled below in Theorem \ref{THM10} and Corollary \ref{COR10}, there are descriptions of the minimal resolution of $N(f)$, when $C:f=0$ is a $3-$syzygy curve, and respectively a plus-one generated curve of degree $d\geq 3$. Using these results, we first give a general formula for the Hilbert vector $(n(f)_j)$ of the Jacobian module of a $3-$syzygy curve in Theorem \ref{thmmain1}, as well as a graphic representation of its behavior.
Then we determine the Hilbert vector $(n(f)_j)$ of the Jacobian module $N(f)$ when $C:f=0$ is a maximal Tjurina curve, see Proposition \ref{propMT}. Next we get some information on the Hilbert vector $(n(f)_j)$  when $C:f=0$ is a nodal curve, which is complete if in addition all the irreducible components of $C$ are rational, see Theorem \ref{thmNodal}.

In the final section we  use a result due to  Hartshorne, see \cite[Theorem 7.4]{Hart}, to relate the cohomology of some rank 2 vector bundles on $\p^2$ to the
Hilbert vector $(n(f)_j)$ of the Jacobian module $N(f)$. More precisely,
we get in this way a sharp lower bound for the initial degree $\sigma(C)$ of the Jacobian module $N(f)$, under the condition $mdr(f) \geq (d-1)/2$, see Theorem
\ref{thmH1}.

We would like to thank the referee for his careful reading of our manuscript and for his many very useful suggestions to improve the presentation.

\section{Preliminaries}

We recall some notations and results. Let $C:f=0$ be a reduced complex plane curve of $\p^2$, assumed not free, and consider the Milnor algebra $M(f)=S/J_f$, where $J_f=(f_x,f_y,f_z)$ is the Jacobian ideal.
The general form of the minimal resolution for the Milnor algebra $M(f)$ of such a curve $C:f=0$ 
is 
\begin{equation}\label{RMM}0\to \displaystyle{\oplus_{i=1}^{m-2}S(-e_i)}\to \displaystyle{\oplus_{i=1}^{m}S(1-d-d_i)}\to S^3(1-d)\to S,\end{equation} 
with $e_1\leq e_2\leq\ldots\leq e_{m-2}$ and $1 \leq d_1\leq d_2\leq\cdots\leq d_m$. It follows from \cite[Lemma 1.1]{HS12} that one has $$e_j=d+d_{j+2}-1+\epsilon_j,$$ for $j=1,\ldots, m-2$ and some integers $\epsilon_j\geq 1.$
The minimal resolution of $N(f)$ obtained from (\ref{RMM}), by \cite[Proposition 1.3]{HS12}, is
 $$0\to \displaystyle{\oplus_{i=1}^{m-2}S(-e_i)}\to\displaystyle{\oplus_{i=1}^mS(-\ell_i)}\to\displaystyle{\oplus_{i=1}^m}S(d_i-2(d-1))\to \displaystyle{\oplus_{i=1}^{m-2}S(e_i-3(d-1))},$$
 where $\ell_i=d+d_i-1$. It follows that 
\begin{equation}\label{sigma} 
 \sigma(C)=3(d-1)-e_{m-2}=2(d-1)-d_m-\epsilon_{m-2}.
\end{equation} 
The following result describes the central part of the Hilbert vector of $N(f)$.
\begin{thm}\label{THMD}
Let $C:f=0$ be a reduced, non free curve of degree $d$ and set $r=mdr(f)$. Then one has the following.
\begin{enumerate}
    \item[(i)]if $r\geq \frac{d}{2}$ and $2d-4-r\leq j \leq d-2+r$, then 
    \begin{equation}\label{EQTHMD}n(f)_j=\begin{cases}
     3(d')^2-(j-3d'+2)(j-3d'+1)-\tau(C) \mbox{ for } d=2d'+1\\
     3(d')^2-3d'+1-(j-3d'+3)^2-\tau(C)  \mbox{ for } d=2d'
    \end{cases}\end{equation}
    \item[(ii)] if $r<\frac{d}{2}$ and $d+r-3\leq j\leq 2d-r-3$,
    then $n(f)_j=\nu(C)$. Moreover $n(f)_{d+r-4}=n(f)_{2d-r-2}=\nu(C) -1$.
\end{enumerate}
\end{thm}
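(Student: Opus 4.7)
My plan is to reduce the computation of $n(f)_j = \dim N(f)_j$ to a computation of $\dim M(f)_j$ via the short exact sequence
$$0 \to N(f) \to M(f) \to S/\hat{J_f} \to 0,$$
exploiting that once $j$ exceeds the Castelnuovo--Mumford regularity of $\hat{J_f}$ one has $\dim(S/\hat{J_f})_j = \tau(C)$ and hence $n(f)_j = \dim M(f)_j - \tau(C)$. Across the midpoint $T/2$, the self-duality (\ref{sym}) lets any formula established on one side extend symmetrically to the other.

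For part (i), the hypothesis $r \geq d/2$ forces the first syzygies of $J_f$ to have degree $\geq r$ (so they contribute to the Hilbert series of $M(f)$ only from degree $d - 1 + r$ onward), while the second syzygies contribute only from degree $e_1 \geq d + d_3 \geq d + r$. Hence, on the range $j \leq d - 2 + r$, the Hilbert series of $M(f)$ reduces to
$$\dim M(f)_j = \binom{j+2}{2} - 3\binom{j-d+3}{2}.$$
Expanding this separately for $d = 2d' + 1$ and $d = 2d'$ yields $3(d')^2 - (j - 3d' + 2)(j - 3d' + 1)$ and $3(d')^2 - 3d' + 1 - (j - 3d' + 3)^2$, respectively; subtracting $\tau(C)$ gives (\ref{EQTHMD}) on the interval $[T/2,\, d - 2 + r]$, and (\ref{sym}) extends it down to $[2d - 4 - r,\, T/2]$, covering the stated range.

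For part (ii), the same clean formula still holds on $j \leq d - 2 + r$, but this upper bound now lies strictly below $T/2$ and the saturation $\dim(S/\hat{J_f})_j$ has not yet reached $\tau(C)$, so a direct reduction to $\dim M(f)_j$ is unavailable on the plateau. I would instead combine the weak Lefschetz chain (\ref{LPN}) with (\ref{sym}) to deduce that $n(f)_j$ is constant on some symmetric interval around $T/2$, equal to $\nu(C)$; the precise endpoints $[d + r - 3,\, 2d - r - 3]$ are pinned down by tracking the first syzygy $r_1$ of degree $d_1 = r$ through the minimal resolution
$$0 \to \oplus S(-e_i) \to \oplus S(-\ell_i) \to \oplus S(d_i - 2(d-1)) \to \oplus S(e_i - 3(d-1))$$
of $N(f)$, whose generator of degree $2(d-1) - d_1 = 2d - r - 2$ in $F_1$ accounts for the unit drop $n(f)_{2d-r-2} = \nu(C) - 1$ just to the right of the plateau, with the symmetric drop at $j = d + r - 4$ following by (\ref{sym}).

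The main obstacle is in part (ii): pinning the plateau endpoints exactly and showing that the drop immediately outside is precisely $1$ (and not larger) requires careful Betti-number bookkeeping in the minimal resolution of $N(f)$, together with the weak Lefschetz property of \cite{DP16} to ensure that $n(f)_j$ cannot drop internally within the plateau.
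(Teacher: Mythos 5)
Note first that the paper does not actually prove Theorem \ref{THMD}: it is quoted from \cite{DSInB} (Theorems 3.1 and 3.2), so your attempt has to be measured against that proof, which works with $h^1(\p^2,E_C(k))=n(f)_{k+d-1}$. Your treatment of part (i) is essentially a Hilbert--function translation of that argument and is sound in outline: $m(f)_j=\binom{j+2}{2}-3\binom{j-d+3}{2}$ for $j\le d-2+r$ does follow from the resolution \eqref{RMM}, your polynomial identities are correct, and the formula is symmetric about $T/2$, so \eqref{sym} extends it to the whole range. The one step you leave hanging is $\dim(S/\hat J_f)_j=\tau(C)$ on $[T/2,\,d-2+r]$: appealing to ``the Castelnuovo--Mumford regularity of $\hat J_f$'' is not enough, because you never bound that regularity. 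What is actually needed (and available) is the defect statement $\dim(S/\hat J_f)_k=\tau(C)$ for $k\ge T-ct(f)$, see \cite{DBull}; combined with $ct(f)\ge d-2+r$ and $r\ge d/2$ this gives $T-ct(f)\le 2d-4-r\le T/2$, and then your argument for (i) closes.

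Part (ii) has a genuine gap. The weak Lefschetz chain \eqref{LPN} together with \eqref{sym} only gives that $(n(f)_j)$ is symmetric and unimodal; it cannot produce a plateau of prescribed length, nor the exact value $\nu(C)$ on $[d+r-3,\,2d-r-3]$, nor that the drop at $j=d+r-4$ and $j=2d-r-2$ is exactly one. Your fallback, ``Betti-number bookkeeping'' in the resolution of $N(f)$, cannot be carried out either, because for an $m$-syzygy curve the twists $d_2,\dots,d_m$ and $e_1,\dots,e_{m-2}$ are not controlled by the hypothesis $r<d/2$, so you cannot even decide which terms of the resolution contribute in the plateau range; a single generator of degree $2d-r-2$ in one term does not by itself force a drop of exactly one. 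Moreover, when $r<(d-2)/2$ the two facts you do have (the coincidence threshold, valid for $j\le d-2+r$, and $\dim(S/\hat J_f)_j=\tau(C)$, valid for $j\ge 2d-4-r$) overlap in no degree, and the plateau is its own reflection under $j\mapsto T-j$, so symmetry cannot import the missing values. The missing idea is the structure theorem in the unstable range: for $r<d/2$ the degree-$r$ syzygy is unique up to scalar and every syzygy of degree $<d-r$ is a multiple of it (equivalently, one has an exact sequence $0\to \mathcal{O}_{\p^2}(-r)\to E_C\to \mathcal{I}_Z(r-d+1)\to 0$ with $Z$ a zero-dimensional scheme of length $(d-1)^2-r(d-1)+r^2-\tau(C)=\nu(C)$); twisting this sequence and using $H^1(\p^2,E_C(k))=N(f)_{k+d-1}$ from \cite{Se14} gives $n(f)_j=|Z|$ exactly on $[d+r-3,\,2d-r-3]$ and $n(f)_j=|Z|-1$ at the two adjacent degrees. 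Without this input (or an equivalent algebraic description of the low-degree part of $\Syz(J_f)$ and $\hat J_f$), your proposal for (ii) cannot be completed.
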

\begin{proof}
 See \cite[Theorem 3.1 and Theorem 3.2]{DSInB}.
\end{proof}
 By Theorem \ref{THMD}, in case (i), the points $(j,n(f)_j)$ lie on an upward pointing parabola. Moreover, using the formulas (\ref{cond3}) and (\ref{cond4}) and Remark \ref{rk4.1}, the claim (\ref{EQTHMD}) can be written: 
 $$n(f)_j=\begin{cases}
     \nu(C)-(j-\lfloor\frac{T}{2}\rfloor)(j-\lceil\frac{T}{2}\rceil) \mbox{ for } d=2d'+1\\
     \nu(C)-(j-\frac{T}{2})^2  \mbox{ for } d=2d'\end{cases}$$ with $T=3(d-2)$ as above.
     On the other hand, in the case (ii), the points $(j, n(f)_j)$ lie on a horizontal line segment, with a one-unit drop at the extremities, as represented in Figure 1 below.
\begin{figure}[h]
\centering
\begin{tikzpicture}[scale=0.70]
\draw[black, ultra thin] (0,0) -- (10,0);
\draw[black, thin] (1,1) -- (2,2);
\draw[black, thin] (2,2) -- (8,2);
\draw[black,thin] (8,2)-- (9,1);
\node[right] at (0.5,-0.5) {$d+r-4$};
\draw[dashed, thin] (1,0) -- (1,1);
\draw[dashed, thin] (2,-0.09)--(2,2);
\draw[dashed,thin] (9,0)--(9,1);
\draw[dashed,thin] (8,2)--(8,-0.1);
\node[right] at (6.5,-0.5) {$2d-r-2$};
 \node at (2,0) {$\bullet$};
\node at (8,0) {$\bullet$}; 
\end{tikzpicture}


\caption{The case $r < {d \over 2}$.}
\label{fig: case 1}
\end{figure}

Recall the following definition, see \cite{D17, DS17}.

\begin{defin}
\label{def}
For a plane curve $C:f=0$, the {\it coincidence threshold} of $f$ is the integer
$$ct(f)=\max \{q:\dim M(f)_k=\dim M(f_s)_k \text{ for all } k \leq q\},$$
with $f_s$  a homogeneous polynomial in $S$ of the same degree $d$ as $f$ and such that $C_s:f_s=0$ is a smooth curve in $\p^2$.

\end{defin}
It is known that $ct(f) \geq d-2+mdr(f)$.\\
Note that $C: f=0$ is a free curve if and only if $\nu(C)=0$, hence $N(f)=0$. In other words, the Jacobian ideal is satured in this case, i.e. $\hat{J}_f=J_f$, see \cite{DS14,ST14}. The first nontrivial case is that of nearly free curves; indeed, by \cite[Corollary 2.17]{DS18}, for a nearly free curve $C:f=0$, one has $N(f)\neq 0$ and $\nu(C)=1$. Moreover $\sigma(C)=d+d_1-3$ and this describes completely the Hilbert vector of the Jacobian module of a nearly free curve. In particular it has the  shape described on the left side of Figure 2.
\begin{figure}[h]
\centering
\begin{tikzpicture}[scale=0.30]
\draw[black, ultra thin] (0,0) -- (10,0);
\draw[dashed, thin] (2,-0.09) -- (2,1);
\node[right] at (1,-1) {$\sigma(C)$};
\draw[black, thin] (2,1) -- (8,1);
\node[right] at (7.5,-1) {$T-\sigma(C)$};
\draw[dashed, thin] (8,1) -- (8,-0.09);
\node[right] at (4.5,-1) {$\frac{T}{2}$};
\draw[dashed, thin] (5,-0.09) -- (5,2);
\end{tikzpicture}
\hspace*{0.2in}
\begin{tikzpicture}[scale=0.30]
\draw[black, ultra thin] (0,0) -- (20,0);
\draw[black, thin] (2,1) -- (8,7);
\draw[dashed, thin] (2,-0.09) -- (2,1);
\draw[dashed, thin] (2,1) -- (8,1);
\draw (3,1) arc (0:45:1);
\node[right] at (1,-1) {$\sigma(C)=k_3$};
\node[right] at (3,1.5) {$\frac{\pi}{4}$};
\draw[dashed, thin] (8,-0.09) -- (8,7);
\node[right] at (7.5,-1) {$k_2$};
\draw[black, thin] (8,7) -- (16,7);
\draw[dashed, thin] (12,7) -- (12,-0.09);
\node[right] at (11.5,-1) {$\frac{T}{2}$};
\draw[decorate, decoration={brace,mirror}] (14,0.5) -- (14,6.5);
\node[right] at (15,3.5) {$d_3-d_2+1=\nu(C)$};
\end{tikzpicture}
\caption{The case of plus-one generated curves}
\label{fig:3syz}
\end{figure}
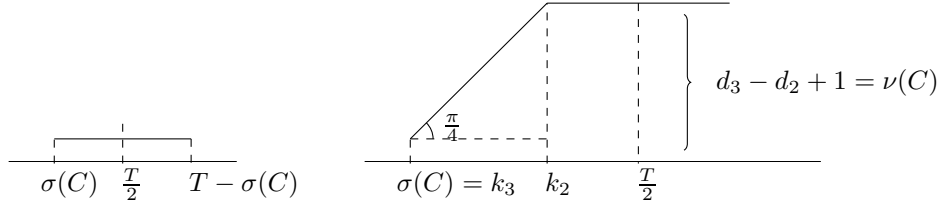
Recall that a nearly free curve is exactly a plus-one generated curve with exponents satisfying $d_2=d_3$.
For the more general case of the $3-$syzygy curves, we recall the following result.
\begin{thm}\label{THM10}\cite[Theorem 3.9]{DSIn} Let $C:f=0$ be a $3-$syzygy curve with exponents $(d_1,d_2,d_3)$ and set $e=d_1+d_2+d_3$. Then the minimal free resolution of $N(f)$ as a graded $S-$module has the form 
    \begin{equation}\label{MRNF}0\to S(-e)\to \displaystyle{\oplus_{i=1}^3S(1-d-d_i)}\to \displaystyle{\oplus_{i=1}^3S(d_i+2-2d)}\to S(e+3-3d),\end{equation} where the leftmost map is the same as in the resolution (\ref{RMM}), when $m=3$. In particular, $$\sigma(C)=3(d-1)-(d_1+d_2+d_3).$$

\end{thm}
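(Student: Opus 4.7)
The plan is to reduce the theorem to a single numerical computation. The general shape of the minimal resolution (\ref{RMM}) already determines, for $m=3$, every graded Betti number of $M(f)$ except the single invariant $e_1$. Likewise \cite[Proposition 1.3]{HS12}, as recalled in the excerpt, mechanically produces the minimal resolution of $N(f)$ from that of $M(f)$, so as soon as $e_1$ is known, both (\ref{MRNF}) and the formula for $\sigma(C)$ follow.

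To pin down $e_1$, I would use a Hilbert-series argument. From (\ref{RMM}) with $m=3$ one reads off
$$H_{M(f)}(t) = \frac{N(t)}{(1-t)^3}, \qquad N(t) = 1 - 3t^{d-1} + \sum_{i=1}^{3} t^{d+d_i-1} - t^{e_1}.$$
Since $C$ is reduced, the subscheme of $\p^2$ cut out by $J_f$ has dimension $0$, so $S/J_f$ has Krull dimension at most $1$; hence in lowest terms the denominator of $H_{M(f)}(t)$ divides $1-t$, which forces $(1-t)^2 \mid N(t)$. The condition $N(1) = 1 - 3 + 3 - 1 = 0$ is automatic, while a direct differentiation gives
$$N'(1) = -3(d-1) + \sum_{i=1}^{3}(d+d_i-1) - e_1 = (d_1+d_2+d_3) - e_1,$$
so $N'(1) = 0$ yields $e_1 = d_1+d_2+d_3 = e$.

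Substituting $m=3$, $\ell_i = d+d_i-1$, and $e_1 = e$ into the resolution of $N(f)$ recalled from \cite[Proposition 1.3]{HS12} reproduces (\ref{MRNF}) term by term, and the identification of the leftmost map with the one in (\ref{RMM}) is built into that construction. Finally, $\sigma(C) = 3(d-1) - e_1 = 3(d-1) - (d_1+d_2+d_3)$ is immediate from (\ref{sigma}) with $m=3$.

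The argument has essentially no hard step beyond the cited results of \cite{HS12}: the entire content is concentrated in the vanishing of $N'(1)$, whose only conceptual input is that the singular subscheme of the reduced curve $C$ is $0$-dimensional, so that the order of the pole of $H_{M(f)}(t)$ at $t=1$ is at most $1$.
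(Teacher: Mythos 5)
Your argument is correct, and it is worth noting that the paper itself gives no proof of this statement at all: Theorem \ref{THM10} is quoted verbatim from \cite[Theorem 3.9]{DSIn}, so the only comparison available is with the structural facts the paper recalls in its preliminaries. Your proposal is a genuine self-contained derivation from exactly those facts: you take the shape \eqref{RMM} of the minimal resolution of $M(f)$ (with the last module of rank $m-2=1$) and the passage to the resolution of $N(f)$ via \cite[Proposition 1.3]{HS12} as given, and you observe that the single remaining unknown $e_1$ is forced by the Hilbert series. The key step is sound: since $C$ is reduced, $M(f)=S/J_f$ has Krull dimension at most $1$, so the numerator $N(t)=1-3t^{d-1}+\sum_i t^{d+d_i-1}-t^{e_1}$ must vanish to order at least $2$ at $t=1$; $N(1)=0$ is automatic and $N'(1)=e-e_1=0$ gives $e_1=e$, after which \eqref{MRNF}, the identification of the leftmost map, and $\sigma(C)=3(d-1)-e$ all follow from \eqref{sigma} and the recalled resolution of $N(f)$ with $\ell_i=d+d_i-1$. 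It is also a nice point, implicit in your write-up, that this Hilbert-series trick pins down the last shift only when $m=3$ (for general $m$ it yields just $\sum_j e_j=(m-3)(d-1)+\sum_i d_i$), which explains why the statement is special to $3$-syzygy curves. The only mild dependence to keep in mind is that the minimality of the output resolution of $N(f)$, and the fact that its leftmost map coincides with that of \eqref{RMM}, are being imported from \cite{HS12} rather than reproved; since the paper recalls these in the same form, that is an acceptable level of citation here.
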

This implies the following.
\begin{cor}\label{COR10} \cite[Corollary 3.10]{DSIn}
 Let $C:f=0$ be a plus-one generated curve of degree $d\geq 3$ with $(d_1,d_2,d_3)$, which is not nearly free, i.e. $d_2<d_3.$ Set $k_j=2d-d_j-3$ for $j=1,2,3$. Then one has the following minimal free resolution of $N(f)$ as a graded $S-$module:
 \begin{align*}0\to S(-d-d_3)\to S(-d-d_3+1)\oplus S(-k_1-2)\oplus S(-k_2-2)\to\\ \to S(-k_1-1)\oplus S(-k_2-1)\oplus S(-k_3-1)\to S(-k_3).\end{align*} In particular $\sigma(C)=k_3<k_2 \leq \frac{T}{2}$ and the Hilbert vector of $N(f)$ is given by following formulas: 
 \begin{enumerate}
     \item $n(f)_j=0$ for $j<k_3$;
     \item $n(f)_j=j-k_3+1$ for $k_3\leq j\leq k_2$;
     \item $n(f)_j=d_3-d_2+1=\nu(C)$ for $k_2\leq j\leq \frac{T}{2}.$
 \end{enumerate}
\end{cor}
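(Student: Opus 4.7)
The plan is to deduce Corollary \ref{COR10} by specializing the minimal free resolution of $N(f)$ supplied by Theorem \ref{THM10} to the plus-one generated case, and then reading off the Hilbert vector from the resulting resolution via the standard Euler-characteristic calculation. The hypothesis $d_2<d_3$ distinguishes this case from the nearly free one and provides the strict inequalities that force the various degree shifts not to collide.

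Setting $e=d_1+d_2+d_3$ and using $d_1+d_2=d$ yields $e=d+d_3$. A routine relabeling in terms of $k_j=2d-d_j-3$ rewrites the shifts appearing in the resolution of Theorem \ref{THM10}: $-e=-d-d_3$; the three terms $S(1-d-d_i)$ become $S(-k_2-2)$, $S(-k_1-2)$, and $S(-d-d_3+1)$ (using $d_1=d-d_2$); the three terms $S(d_i+2-2d)$ become $S(-k_i-1)$; and $S(e+3-3d)=S(-k_3)$. This is exactly the complex displayed in the corollary. Since Theorem \ref{THM10} asserts the original complex is a minimal free resolution of $N(f)$ and we have only renamed shifts, the displayed complex is also a minimal free resolution, and in particular $\sigma(C)=k_3$. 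The inequality $k_3<k_2$ comes from $d_3>d_2$, while $k_2\le T/2$ comes from $k_1+k_2=T$ together with $d_1\le d_2$ (which gives $k_1\ge k_2$).

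For the Hilbert vector I would apply $n(f)_j=\sum_i(-1)^i\dim(F_i)_j$, with $\dim S(-a)_j=\binom{j-a+2}{2}$ for $j\ge a$ and $0$ otherwise. Claim (1) is immediate. For claim (2), in the range $k_3\le j\le k_2$, only $S(-k_3)\subset F_0$ and $S(-k_3-1)\subset F_1$ contribute, yielding $\binom{j-k_3+2}{2}-\binom{j-k_3+1}{2}=j-k_3+1$. For claim (3), in the range $k_2\le j\le T/2\le k_1$, the summands $S(-k_1-1)$, $S(-k_1-2)$, $S(-d-d_3+1)$, $S(-d-d_3)$ all vanish: here one uses $T/2\le k_1$ and the identity $d+d_3-1=k_1+2+(d_3-d_2)>k_1+2$ coming from $d_2<d_3$. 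The alternating sum of the four surviving terms then collapses, via $\binom{n+1}{2}-\binom{n}{2}=n$, to the constant $k_2-k_3+1=d_3-d_2+1$, proving claim (3).

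The main obstacle is the bookkeeping in the third range: one must verify that every summand whose shift involves $k_1$ or $d+d_3$ genuinely contributes zero throughout $k_2\le j\le T/2$, which is precisely where the strict inequality $d_2<d_3$ is indispensable. As a sanity check, when $d_1<d/2$ the identification $\nu(C)=d_3-d_2+1$ matches Theorem \ref{THMD}(ii), since $k_2=d+d_1-3$ is then exactly the left endpoint of the central plateau of the Hilbert vector and the case (2) formula at $j=k_2$ yields $k_2-k_3+1$.
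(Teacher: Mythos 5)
Your proposal is correct and follows exactly the route the paper intends: Corollary \ref{COR10} is presented there as an immediate consequence of Theorem \ref{THM10} (quoted from \cite{DSIn}), obtained as you do by substituting $d_1+d_2=d$, $e=d+d_3$ into the resolution, renaming the shifts via $k_j=2d-d_j-3$, and reading off the graded dimensions by the alternating sum. The only point worth adding is that the identification $d_3-d_2+1=\nu(C)$ is best justified from your computed values together with the symmetry \eqref{sym} (so the maximum of the Hilbert vector is attained at $\lfloor T/2\rfloor$, where it equals $k_2-k_3+1$), rather than by appeal to Theorem \ref{THMD}(ii), which only covers the case $d_1<d/2$; also, the vanishing of the $S(-d-d_3)$ and $S(-d-d_3+1)$ contributions for $j\le T/2$ already follows from $d_3\ge d_2\ge d/2$, so the strictness $d_2<d_3$ is not actually needed there.
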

By above corollary, the Hilbert vector of the Jacobian module of a plus-one generated curve of degree $d$ and level $d_3$ has the shape given on the right hand side of Figure 2, {\it where we have drawn only the part corresponding to $j \leq \frac{T}{2}$, due to the symmetry} \eqref{sym}.

 \section{Results on the Hilbert vector of $N(f)$ for 3-syzygy curves}

As a simple example of a 3-syzygy curve which is not a plus-one generated curve, let $C:f=0$ be a smooth curve of degree $d\geq 3$,  where $d_1=d_2=d_3=d-1$. It is known that the Hilbert function of the Milnor algebra $M(f)$ is in this case $\left(\frac{1-t^{d-1}}{1-t}\right)^3$. For a smooth curve we have $N(f)=M(f)$, hence $n(f)_j=\dim M(f)_j$ and the Hilbert vector of the Jacobian module $N(f)$ has the shape described in Figure 3. It is interesting to notice the change in convexity when we pass through the value $j=d-1$.    
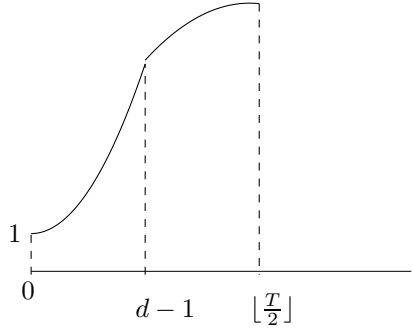
\begin{figure}[h]
\centering
\begin{tikzpicture}[scale=0.50]
\draw[black, ultra thin] (0,0) -- (10,0);
\draw[black, thin] plot[smooth, domain=0:3] (\x, {0.5*(\x)^2+1} );
\node[right] at (2.5,-1) {$d-1$}; 
\draw[dashed, thin] (3,-0.09)--(3,0.5*3^2+1);
\node[left] at (0,1) {$1$};
\node[right] at (-0.5,-0.5) {$0$};
\draw[dashed, thin] (0,-0.09)--(0,1);
\draw[black, thin] plot[domain=3:6] (\x, {-0.2*\x^2+2.3*\x+0.5});
\draw[dashed, thin] (6,7.1)--(6,-0.09);
\node[right] at (5.5,-1) {$\lfloor\frac{T}{2}\rfloor$};
\end{tikzpicture}
\caption{The case of smooth curves}
\label{fig:smooth}
\end{figure}

For a general $3-$syzygy curve, we have the following result.

\begin{thm} \label{thmmain1}
Let $C:f=0$ be a $3-$syzygy curve of degree $d$, not plus-one generated, with exponents $d_1\leq d_2\leq d_3$. Set $e=d_1+d_2+d_3$ and $k_i=2(d-1)-d_i$ for $i=1,2,3$. Then the following hold.
$$n(f)_k=\begin{cases}
 0 \mbox{ for } k< \sigma\\
 \binom{k-\sigma+2}{2} \mbox{ for } \sigma \leq k < k_3\\
 \binom{k-\sigma+2}{2}-\binom{k-k_3+2}{2} \mbox{ for } k_3\leq k<k_2\\
 \binom{k-\sigma+2}{2}-\binom{k-k_3+2}{2}-\binom{k-k_2+2}{2} \mbox{ for } k_2\leq k< T_0,

\end{cases}$$
where $\sigma=\sigma(C)=3(d-1)-e$ and $$T_0=\begin{cases}k_1 -1\mbox{ if } d_1\geq \frac{d}{2}\\ d+d_1-2 \mbox{ if } d_1<\frac{d}{2}. \end{cases}$$
\end{thm}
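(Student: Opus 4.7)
The plan is to perform a direct Hilbert series computation from the minimal free resolution of $N(f)$ given by Theorem \ref{THM10}. Writing $\ell_i:=d+d_i-1$ for brevity, that resolution reads
\begin{equation*}
0 \to S(-e) \to \bigoplus_{i=1}^3 S(-\ell_i) \to \bigoplus_{i=1}^3 S(-k_i) \to S(-\sigma) \to N(f) \to 0.
\end{equation*}
Since $\dim S(-a)_k = \binom{k-a+2}{2}$ for $k\ge a$ (and $0$ otherwise), taking alternating sums gives
\begin{equation*}
n(f)_k = \binom{k-\sigma+2}{2} - \sum_{i=1}^3\binom{k-k_i+2}{2} + \sum_{i=1}^3\binom{k-\ell_i+2}{2} - \binom{k-e+2}{2},
\end{equation*}
under the convention that $\binom{n}{2}=0$ for $n<2$. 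The whole problem then reduces to identifying, in each range of $k$, precisely which of these binomials are nonzero.

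I next record the ordering of the shifts. From $d_1\le d_2\le d_3$ one has $k_3\le k_2\le k_1$ and $\ell_1\le\ell_2\le\ell_3\le e$. The hypothesis that $C$ is $3$-syzygy but not plus-one generated, combined with the general inequality $d_1+d_2\ge d$ for $3$-syzygy curves (which follows from $\epsilon_1=d_1+d_2-d+1\ge 1$), yields $d_1+d_2>d$, which is equivalent to the two strict inequalities
\begin{equation*}
\sigma<k_3 \qquad \text{and} \qquad k_2+1<\ell_1.
\end{equation*}
With these in hand the first three cases of the statement follow immediately: for $k<\sigma$ every binomial vanishes; for $\sigma\le k<k_3$ only $\binom{k-\sigma+2}{2}$ is nonzero; for $k_3\le k<k_2$ the term $\binom{k-k_3+2}{2}$ joins in; and for $k_2\le k$, still below $\min(k_1,\ell_1)$, the term $\binom{k-k_2+2}{2}$ appears as well.

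The one delicate range is the last, $k_2\le k<T_0$, in which I must verify that $k<k_1$ and $k<\ell_1$ hold throughout. Comparing $k_1=2d-d_1-2$ with $\ell_1=d+d_1-1$ shows $k_1\le\ell_1\iff 2d_1\ge d-1$, which for integer $d_1$ is precisely the condition $d_1\ge d/2$ of the statement. Hence if $d_1\ge d/2$ the binding threshold is $k_1$, giving $T_0=k_1-1$; if $d_1<d/2$ the binding threshold is $\ell_1$, giving $T_0=\ell_1-1=d+d_1-2$. The main obstacle of the proof is exactly this bookkeeping of thresholds, ensuring that $T_0$ is chosen so that only the three listed binomials contribute throughout the range; once it is in place, the piecewise formula of the theorem drops out of the alternating-sum expression above.
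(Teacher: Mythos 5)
Your argument is correct and is essentially the paper's own proof: both compute $n(f)_k$ from the minimal free resolution of $N(f)$ given in Theorem \ref{THM10}, use $d_1+d_2>d$ (forced by the curve not being plus-one generated) to order the shifts and justify the first three ranges, and compare $k_1$ with $\ell_1=d+d_1-1$ to decide whether the binding threshold gives $T_0=k_1-1$ or $T_0=\ell_1-1$. The only tiny imprecision is your claim that $2d_1\ge d-1$ is ``precisely'' $d_1\ge d/2$: for odd $d$ with $2d_1=d-1$ one has $k_1=\ell_1$ while $d_1<d/2$, but in that boundary case both candidate thresholds coincide, so your determination of $T_0$ and the final formulas are unaffected.
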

Note that  $n(f)_k$ is known  for  $T_0\leq k\leq \frac{T}{2} $ in view of Theorem \ref{THMD}, hence the information on the Hilbert vector of $N(f)$ is complete in this situation.
\begin{proof}
Note that $\sigma\geq 0$, since, by  \cite[Theorem 2.4]{DSIn} we have $d_j \leq d-1$ for $j=1,2,3$. Then, by  \cite[Theorem 2.3]{DSIn} we have $d_1+d_2> d> d-1$ and hence
$$\sigma=3(d-1)-(d_1+d_2+d_3)<3(d-1)-(d-1)-d_3=2(d-1)-d_3=k_3.$$ 
By Theorem \ref{THM10}, the minimal resolution of $N(f)$ is 
$$0\to S(-e)\to\displaystyle{\oplus_{j=1}^3S(-\ell_j)}\to\displaystyle{\oplus_{i=1}^3S(-k_i)}\to S(e-3(d-1)),$$
where $\ell_j=d-1+d_j$. 
We note that $k_3\leq k_2\leq k_1$ and also $k_2 \leq T_0$. If we fix $k$ with $\sigma \leq k<k_3$, the minimal resolution of $N(f)$ above yields
$$n(f)_k=\dim S_{k-\sigma}   = \binom{k-\sigma+2}{2}.$$
Now we consider the case $k_3 \leq k< k_2$. We have  $\ell_1 > k_2$, since $d_1+d_2>d$ as we have seen above. It follows that
$$n(f)_k    
    =\dim S_{k-\sigma}-\dim S_{k-k_3}    = \binom{k-\sigma +2}{2}-\binom{k-k_3+2}{2}.$$
This difference is a linear form in $k$, and the coefficient of $k$ is given by $(k_3-\sigma)$. Note that
$k_3-\sigma=2(d-1)-d_3-3(d-1)+e=d_1+d_2-(d-1)\geq 2$. 
To continue, we need to discuss the position of $\ell_1$ with respect to $k_1$. Note that $\ell_1 >k_1$ if and only if $d_1\geq d/2$.
Hence we have to consider two cases.

\noindent {\bf Case 1: $d_1 \geq d/2$}. In this case, we can compute the value $n(f)_k$  for $k \leq k_1-1$ exactly as above, and we get
$$n(f)_k=\binom{k-\sigma+2}{2}-\binom{k-k_3+2}{2}-\binom{k-k_2+2}{2}.$$ 
Note that in this case $T_0=k_1-1=2(d-1)-d_1-1 > 2d-4+d_1$ and hence all the Hilbert vector $(n(f)_j)$ is known by using Theorem \ref{THMD} (i).

\noindent {\bf Case 2: $d_1 < d/2$}. In this case $\ell_1 \leq k_1$, and we can compute the value $n(f)_k$  for $k \leq \ell_1-1$ exactly as above, obtaining the same formula. Note that in this case $T_0=\ell_1-1=d-2+d_1 > d-3+d_1$, and hence again all the Hilbert vector $(n(f)_j)$ is known by using Theorem \ref{THMD} (ii).

\end{proof}

\begin{exm} \label{ex1}
   Let $C: f=(x^9+y^4z^5)^7+xz^{62}=0$, a singular curve of degree $d=63$. It is a $3-$syzygy curve, not plus-one generated, with $d_1=9$, $d_2=56$ and  $d_3=62$. We have
   $$
   e=\displaystyle{\sum_{i=1}^3d_i=127}, \ \sigma=59, \  k_3=62, \ k_2=68.$$ Since $d_1<\frac{d}{2}$, $T_0=d+d_1-2=70.$ 
   The first quadratic part is for $k \in [59,61]$, the middle linear part is for
   $k \in [62,67]$,  and the second quadratic part is for $k \in [68,70]$. 
   This second quadratic part is too short, containing only 3 points $(j,n(f)_j)$, to be seen in a graphical representation of the corresponding Hilbert vector. Note also that one has $n(f)_k=\nu(C)=27$ for $k\in [69, 91]$, where $91=\lfloor T/2 \rfloor$.
 In particular, the last two points on the second quadratic part are in fact situated on this horizontal line segment. 
     \end{exm}   
 
\begin{exm} \label{ex2}

Let $C: f=(x+y)^2(x-y)^2(x+2y)^2(x-2y)^2(x+3y)^2(x-3y)^2(x+4y)^2(x-4y)^2(x+5y)^2(x-5y)^2+z^{20}=0$, a singular curve of degree $d=20$. It is a $3-$syzygy curve not plus-one generated, with $d_1=9$ and $d_2=d_3=19$. We have
   $$
   e=\displaystyle{\sum_{i=1}^3d_i=47}, \ \sigma=10, \  k_3=k_2=19.$$
   Since $d_1<\frac{d}{2}$, $T_0=d+d_1-2=27=T/2.$
 The first quadratic part is for $k \in [10,18]$, the middle linear part is missing since $k_2=k_3$, the second quadratic part is for $k \in [19,27]$ and one has $n(f)_k=\nu(C)=81$ for $k\in [26, 27]$, where $27=T/2 =T_0$.
     \end{exm}

\begin{figure}[h]
\centering
\begin{tikzpicture}[scale=1.7]
 
 \draw[black] (0,6.5)--(6,6.5);
\draw[black] (0,6.5)-- (0,10.2);
 \draw[dashed] (2,6.7)--(2,6.5);
 \draw[dashed] (2,6.7)--(0,6.7);
 \draw[dashed] (3.8,8.8)--(3.8,6.5);
 \draw[dashed] (3.8,8.8)--(0,8.8);
 \draw[dashed] (5.4,9.9)--(5.4,6.5);
\draw[dashed] (5.4,9.9)--(0,9.9); 

 \node at (2,6.7) {$\bullet$};
 \node at (2,6.5) {$\bullet$};
 \node at (0,6.7) {$\bullet$};
 
 \node at (3.8,8.8) {$\bullet$};
 \node at (3.8,6.5) {$\bullet$}; 
 \node at (0,8.8) {$\bullet$};
  
 \node at (5.4,9.9) {$\bullet$};
  \node at (5.4,6.5) {$\bullet$}; 
 \node at (0,9.9) {$\bullet$};

 
 
 
 
 \draw[black] plot[smooth, domain=2:3.8] (\x, {0.5*((\x)^2-3.4*\x+16.2)} );
 
\draw[black] plot[smooth, domain=3.8:5.4] (\x, {0.5*((\x)^2-3.4*\x+16.2)-((\x)^2-7*\x+17*18/25)});
 
  
 

 \node[right] at (1.9,6.3) {$\sigma$};
  \node[right] at (3.6,6.3) {$k_2=k_3$};
 \node[right] at (5.2,6.3) {$T_0=\frac{T}{2}$};
\node[right] at (-0.3,6.7) {1}; 
\node[right] at (-0.4,8.8) {53}; 
\node[right] at (-1.1,9.9) {$\nu(C)=81$};

   \end{tikzpicture}
   \caption{The Hilbert vector for Example \ref{ex2}}
\label{fig:Ex2}
\end{figure}
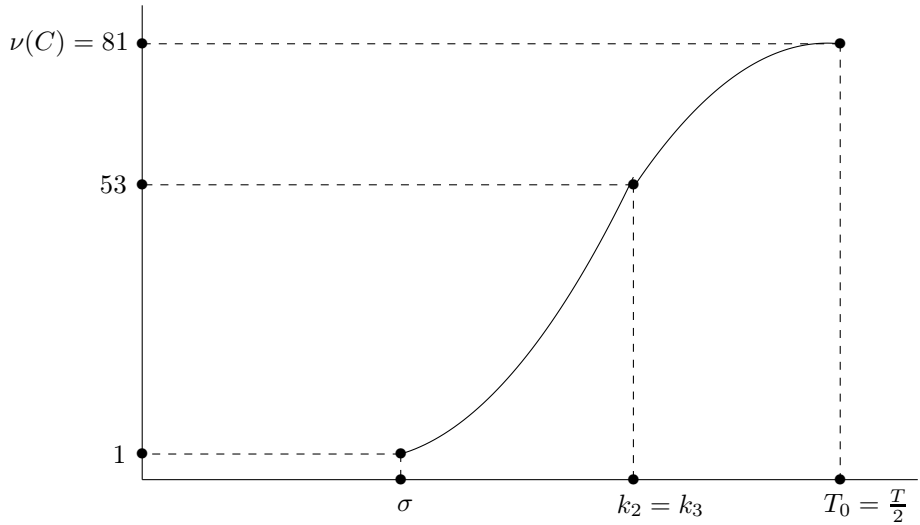


\section{Maximal Tjurina curves and nodal curves}

We assume in this section that $r=d_1 \geq d/2$.\\
A reduced plane curve $C:f=0$ of degree $d$ is called a maximal Tjurina curve if  the global Tjurina number $\tau(C)$ equals the  du Plessis-Wall upper bound, namely if 
\begin{equation}\label{tauM}
\tau(C)=(d-1)(d-r-1)+r^2-{ 2r-d+2 \choose 2},
\end{equation}
see \cite{DStMax,dPW,E}. We know that a reduced plane curve $C:f=0$ of degree $d$ is a maximal Tjurina curve if and only if one has
$d_1=d_2= \dots =d_m=r$, $e_1=e_2=\dots =e_{m-2}=d+r$ and $m=2r-d+3$, see \cite[Theorem 3.1]{DStMax}. 
Using now the equality \eqref{sigma}, it follows that in this case
\begin{equation}\label{sigma2}
\sigma(C)=2d-r-3.
\end{equation}
Theorem \ref{THMD} yields then the following result.

\begin{prop} \label{propMT}
Let $C:f=0$  be  a maximal Tjurina curve  of degree $d$ with $r=d_1 \geq d/2$. Then the Hilbert vector of the Jacobian module $N(f)$ is given by the following
$$n(f)_j=\begin{cases}
     3(d')^2-(j-3d'+2)(j-3d'+1)-\tau(C) \mbox{ for } d=2d'+1\\
     3(d')^2-3d'+1-(j-3d'+3)^2-\tau(C)  \mbox{ for } d=2d'
    \end{cases}$$
    for $2d-3-r\leq j\leq d-3+r$ and $n(f)_j=0$ otherwise.

\end{prop}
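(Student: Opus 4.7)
The plan is to combine the explicit initial degree given by \eqref{sigma2}, the self-duality \eqref{sym}, and the parabolic formula of Theorem \ref{THMD}(i); the proposition should then fall out as a direct consequence, since no new structural work on $N(f)$ is required beyond what has already been recalled.

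First I would record that for a maximal Tjurina curve with $r = d_1 \geq d/2$, the equality \eqref{sigma2} (itself obtained from \eqref{sigma} together with the classification $e_1 = \cdots = e_{m-2} = d+r$ for maximal Tjurina curves) yields $\sigma(C) = 2d-r-3$, which is precisely the left endpoint of the interval appearing in the proposition. By the definition of $\sigma(C)$ as $\indeg(N(f))$, this immediately gives $n(f)_j = 0$ for $j < 2d-3-r$. The self-duality \eqref{sym} with $T = 3(d-2)$ then propagates this vanishing to the right: since $T - \sigma(C) = 3d-6 - (2d-r-3) = d+r-3$, we conclude $n(f)_j = n(f)_{T-j} = 0$ for $j > d-3+r$, which is exactly the ``otherwise'' clause of the statement.

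For the formula itself on the interval $[2d-3-r,\, d-3+r]$, I would simply invoke Theorem \ref{THMD}(i): the hypothesis $r \geq d/2$ is in force by assumption, and the relevant interval is strictly contained in the larger range $[2d-4-r,\, d-2+r]$ on which the theorem's parabolic expressions are valid. The two cases $d=2d'+1$ and $d=2d'$ then apply verbatim, with $\tau(C)$ kept as an abstract parameter (whose maximal value \eqref{tauM} is not actually needed to state the Hilbert vector).

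Since the proposition is essentially a three-line corollary of previously established facts, I do not expect any genuine obstacle. The only conceptual point to verify is the identification $\sigma(C) = 2d-r-3$ from \eqref{sigma2}, which serves the dual purpose of pinpointing the endpoint of the support of $N(f)$ and of confirming that the support sits strictly inside the interval of validity of Theorem \ref{THMD}(i); everything else is bookkeeping.
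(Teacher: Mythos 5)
Your proposal is correct and follows essentially the same route as the paper: the paper likewise deduces $\sigma(C)=2d-r-3$ from \eqref{sigma} via the characterization $e_1=\dots=e_{m-2}=d+r$ of maximal Tjurina curves, uses the duality \eqref{sym} to confine the support of $N(f)$ to $[2d-3-r,\,d-3+r]$, and reads off the values there from Theorem \ref{THMD}(i), whose interval $[2d-4-r,\,d-2+r]$ contains this support. The only point left tacit in your write-up, and also implicit in the paper, is that $r\geq d/2$ forces $C$ to be non-free (a free curve has $d_1\leq (d-1)/2$), so Theorem \ref{THMD} indeed applies.
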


Consider now an arbitrary nodal curve $C:f=0$ of degree $d$ in $\p^2$.
Let $\NN$ denote the set of nodes of the curve $C$ and $n(C)$ the number of irreducible components of $C$. For such curves we have the following result.

\begin{thm}\label{thmNodal}
Let $C:f=0$ is a nodal curve in $\p^2$ of degree $d \geq 4$. Then one has the following, with $f_s$ as in Definition \ref{def}.
$$n(f)_k=\begin{cases}
     m(f_s)_k-|\NN| \mbox{ for } d-3 <k \leq T/2 \\
      m(f_s)_k-|\NN|+n(C)-1  \mbox{ for } k=d-3.
      \end{cases}$$
Moreover, when all the irreducible components of $C$ are rational, one has in addition $n(f)_k=0$ for $k \leq d-3$.

\end{thm}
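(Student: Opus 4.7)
The argument rests on three ingredients: identifying $\hat J_f$ with the ideal of the nodes; the coincidence-threshold bound forced by the section-wide hypothesis $r\geq d/2$; and the classical adjoint theory for nodal plane curves.

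The short exact sequence $0\to N(f)\to M(f)\to S/\hat J_f\to 0$ gives $n(f)_k=m(f)_k-h_{\hat J_f}(k)$ in every degree, where $h_{\hat J_f}(k):=\dim(S/\hat J_f)_k$. Because every singularity of $C$ is a node, the Jacobian ideal is locally $(x,y)$ at each node $f=xy$, so the singular subscheme of $C$ is the reduced $0$-dimensional scheme $\NN$ and $\hat J_f=I_\NN$. Writing $h_\NN(k)$ for the resulting Hilbert function, the hypothesis $r=mdr(f)\geq d/2$ combined with $ct(f)\geq d-2+r$ yields $ct(f)\geq (3d-4)/2>T/2$, hence $m(f)_k=m(f_s)_k$ for every $k\leq T/2$. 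Substituting gives
\[ n(f)_k \;=\; m(f_s)_k - h_\NN(k) \qquad\text{for all }k\leq T/2.\]
Since $h_\NN$ is non-decreasing and bounded above by $|\NN|$, the first assertion of the theorem reduces to the two equalities
\[ h_\NN(d-2)=|\NN|,\qquad h_\NN(d-3)=|\NN|-(n(C)-1).\]
The first says that the nodes impose independent conditions on curves of degree $d-2$; the second is the adjoint-deficiency formula in degree $d-3$, with the correction $n(C)-1$ accounting for the reducibility of $C$. Both are classical in the theory of nodal plane curves going back to M.~Noether; plugging them in proves the first assertion.

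For the final claim, assume every irreducible component $C_i$ (of degree $d_i$) is rational. The genus formula forces the number $\delta_i$ of self-nodes of $C_i$ to be $\binom{d_i-1}{2}$, and since $C$ is nodal any two distinct components meet transversely in exactly $d_id_j$ points, so
\[ |\NN| \;=\; \sum_{i=1}^{n(C)}\binom{d_i-1}{2}+\sum_{i<j}d_id_j.\]
A direct expansion of $\binom{d-1}{2}$ with $d=\sum d_i$ gives the elementary identity $|\NN|-(n(C)-1)=\binom{d-1}{2}=m(f_s)_{d-3}$, so the $k=d-3$ case just proved forces $n(f)_{d-3}=0$. Because $d-3\leq T/2$, the weak Lefschetz inequality \eqref{LPN} then yields $n(f)_k\leq n(f)_{d-3}=0$ for all $k\leq d-3$.

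The main technical input is the adjoint-deficiency equality $h_\NN(d-3)=|\NN|-(n(C)-1)$ in the reducible case: for an irreducible nodal curve it is Noether's theorem that adjoints of degree $d-3$ cut out the canonical system on the normalization, and the $(n(C)-1)$ correction for reducible $C$ measures extra global sections arising from the disconnectedness of $\tilde C$, which I would either cite from the classical literature or derive via the conductor exact sequence on $C$.
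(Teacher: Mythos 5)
Your proposal is correct, and its skeleton coincides with the paper's: write $n(f)_k=m(f)_k-\dim (S/\hat J_f)_k$, use $ct(f)\geq d-2+r>T/2$ (with $r\geq d/2$, which for a nodal curve of degree $d\geq 4$ is in fact automatic since $r\geq d-2$) to replace $m(f)_k$ by $m(f_s)_k$ for $k\leq T/2$, and then reduce everything to the Hilbert function of the node scheme, i.e.\ to the two facts that the nodes impose independent conditions on curves of degree $d-2$ and that the defect in degree $d-3$ equals $n(C)-1$. For these two facts the paper simply cites Dimca--Sticlaru (Corollary~1.6 of the Koszul complexes paper), while you appeal to classical adjoint theory and sketch a conductor-sequence derivation; that is the same level of detail, and your sketch does work out (the discrepancy between $h^0(\tilde C,\mathcal O_{\tilde C})=n(C)$ and the defect $n(C)-1$ being absorbed by $H^2(\p^2,\mathcal O(-3))$). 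Where you genuinely diverge is the final claim: the paper cites a theorem of Dimca--Sticlaru on rational curve arrangements to get $n(f)_k=0$ for $k\leq d-3$, whereas you derive it directly by counting nodes, $|\NN|=\sum_i\binom{d_i-1}{2}+\sum_{i<j}d_id_j=\binom{d-1}{2}+n(C)-1$, so that the already-proved case $k=d-3$ gives $n(f)_{d-3}=\binom{d-1}{2}-|\NN|+n(C)-1=0$, and then the weak Lefschetz monotonicity \eqref{LPN} kills all lower degrees since $d-3\leq\lfloor T/2\rfloor$. This alternative is self-contained and arguably more elementary than the citation used in the paper; the only thing it costs is that it does not give the finer information contained in the cited theorem, which holds for arrangements of rational curves beyond the nodal case.
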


\proof
For any reduced plane curve $C:f=0$, one clearly has
$$n(f)_k= m(f)_k - d(f)_k,$$
where $m(f)_k=\dim M(f)_k$ and $d(f)_k=\dim S_k/(\hat{J_f})_k$.
Since we have to determine $n(f)_k$ only for $k \leq T/2$ by symmetry, and since $ct(f) \geq d-2+r >T/2$ when $r=d_1 \geq d/2$, it follows that
$$n(f)_k= m(f_s)_k - d(f)_k,$$
with $f_s$ as in Definition \ref{def} and  $k \leq T/2$. In particular, for such curves, we have to determine only the values $d(f)_k$ for $k \leq T/2$.
On the other hand, we know that
$$d(f)_k=\tau(C),$$
for $k \geq T-ct(C)$, see \cite[Proposition 2]{DBull}. In particular, this equality holds for $k \geq 3(d-2)-(d-2+r)=2d-4-r$, see also the proof of
\cite[Theorem 3.1]{DSInB}.
Assume now that $C:f=0$ is a nodal curve in $\p^2$. Then $r=d_1 \geq d-2 \geq d/2$ for $d \geq 4$, see \cite[Example 2.2 (i)]{DS14}.
Let  $\defect S_k (\NN)$ denote the defect of the set of nodes $\NN$ with respect to the linear system $S_k$.
Then it is known that
$$d(f)_k=|\NN| - \defect S_k(\NN),$$
see \cite{DBull}.
On the other hand, \cite[Corollary 1.6]{DStEdin} implies that $\defect S_k(\NN)=0$ for $k >d-3$ and $\defect S_k(\NN)=n(C)-1$ for $k=d-3$. 
If all the irreducible components of $C$ are rational, then \cite[Theorem 2.7]{DStRLM} shows that $n(f)_k=0$ for $k \leq d-3$. These facts imply our claims.
\endproof

\section{Relation to a result by Hartshorne}

\noindent Let $C:f=0$ be a curve of degree $d$ in $\p^2$, and let $r=mdr(f)$ be the minimal degree of a Jacobian syzygy for $f$. In this section we  give some informations about the invariant $\sigma(C)$, using a result by Hartshorne, namely \cite[Theorem 7.4]{Hart}.
 We recall that the sheafification of $\Syz(J_f)$, denoted by $E_C:=\widetilde{\Syz(J_f)}$, is a rank two vector bundle on  $\p^2$, see \cite{AD18, S80, Se14}. We set $$e(f)_m=\dim \Syz(J_f))_m=\dim H^0(\p^2, E_C(m)),$$ for any integer $m$. Associated to the vector bundle $E_C$ there is the normalized vector bundle $\E_C$, which is the twist of $E_C$ such that $c_1(\E_C)\in\{-1,0\}$. More precisely,
\begin{description}
\item[when $d=2d'+1$ is odd] 
\begin{equation}\label{cond3}
    \E_C=E_C(d'), \quad c_1(\E_C)=0, \quad c_2(\E_C)=3(d')^2-\tau(C),
\end{equation}
and 
\item[when $d=2d'$ is even] 
\begin{equation}\label{cond4}
    \E_C=E_C(d'-1), \quad c_1(\E_C)=-1, \quad c_2(\E_C)= 3(d')^2-3d'+1-\tau(C),
\end{equation}
\end{description}
see \cite[Section 2]{DSInB}.
\begin{rem}  \label{rk4.1} \rm
The vector bundle $E_C$ is stable if and only if $\E_C$ has no sections, see \cite[Lemma 1.2.5]{OSS80}. This is equivalent to $r=mdr(f)\geq \frac{d}{2},$ see \cite[Proposition 2.4]{Se14}. Moreover by \cite[Theorem 2.2]{DSInB} and using the formulas (\ref{cond3}) and (\ref{cond4}), we have that for a stable vector $E_C$, $c_2(\E_C)=\nu(C).$ 
Moreover, the vector bundle $E_C$ is semistable if and only if $r=mdr(f) \geq (d-1)/2$, see again \cite[Lemma 1.2.5]{OSS80}, a condition that occurs in our Theorem \ref{thmH1} below.
\end{rem}
The important key point is the identification $$H^1(C,E_C(k))=N(f)_{k+d-1}$$ for any integer $k$, see \cite[Proposition 2.1]{Se14}. Hence the study of the Hilbert vector of the Jacobian module $N(f)$ is equivalent to the study of the dimension of $H^1(C, E_C(k))$.

\begin{thm} \label{thmH1}
Let $C:f=0$ be a curve of degree $d$, and let $r=mdr(f)$ be the minimal degree of a Jacobian syzygy for $f$. Assume that $r \geq (d-1)/2$, in other words that the rank 2 vector bundle $E_C$ is semistable. Then we have the following.
 \begin{enumerate}
     \item If $d=2d'+1$ is odd, then
     $$\sigma(C) \geq \tau(C)-2(d')^2-2rd'+r^2+3d'-1.$$
     
     \item If $d=2d'$ is even, then
     $$\sigma(C) \geq \tau(C)-2(d')^2-2rd'+r^2+5d'+r-3.$$
     \end{enumerate}
     The above inequalities are sharp, in particular they are equalities when $C$ is a maximal Tjurina curve with $r\geq d/2$.

\end{thm}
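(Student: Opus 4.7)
The strategy leverages the identification $N(f)_{k+d-1} \cong H^1(\mathbb{P}^2, E_C(k))$ recalled just before the theorem, which turns the lower bound on $\sigma(C)$ into an explicit cohomological vanishing statement for the normalized bundle $\mathcal{E}_C$. Set $t_1 := \min\{t \in \mathbb{Z} : h^1(\mathcal{E}_C(t)) \neq 0\}$. Using \eqref{cond3}--\eqref{cond4}, one has $\sigma(C) = t_1 + 3d'$ when $d = 2d'+1$ and $\sigma(C) = t_1 + 3d' - 2$ when $d = 2d'$. Let $\alpha$ denote the least integer with $H^0(\mathcal{E}_C(\alpha)) \neq 0$: then $\alpha = r - d'$ in the odd case and $\alpha = r - d' + 1$ in the even case, and the semistability hypothesis $r \geq (d-1)/2$ guarantees $\alpha \geq 0$.

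The core step is to invoke Hartshorne's Theorem 7.4 to control the support of $t \mapsto h^1(\mathcal{E}_C(t))$. Choose a nonzero section $s_0 \in H^0(\mathcal{E}_C(\alpha))$ with zero locus $Z \subset \mathbb{P}^2$, a $0$-dimensional subscheme of length $c_2(\mathcal{E}_C) + c_1\alpha + \alpha^2$, and use the resulting short exact sequence
\[
0 \longrightarrow \mathcal{O}(t-\alpha) \longrightarrow \mathcal{E}_C(t) \longrightarrow \mathcal{I}_Z(c_1+\alpha+t) \longrightarrow 0.
\]
Since $H^1(\mathcal{O}(m)) = 0$ for all $m$, this identifies $H^1(\mathcal{E}_C(t))$ with $H^1(\mathcal{I}_Z(c_1+\alpha+t))$ in the range $t \geq \alpha-2$. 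The Cayley--Bacharach property of $Z$, inherited from its realization as the zero locus of a section of a rank $2$ bundle, together with Hartshorne's bound forces $h^1(\mathcal{I}_Z(m)) = 0$ for $m$ above an explicit threshold, yielding an upper bound for the largest twist $t_2$ with $h^1(\mathcal{E}_C(t_2)) \neq 0$. Serre duality on $\mathbb{P}^2$ then converts this into the desired lower bound on $t_1$: namely $t_1 \geq \alpha^2 - 1 - c_2(\mathcal{E}_C)$ in the odd case (using $h^1(\mathcal{E}_C(t)) = h^1(\mathcal{E}_C(-t-3))$ since $\mathcal{E}_C^\vee \cong \mathcal{E}_C$) and $t_1 \geq \alpha(\alpha-1) - c_2(\mathcal{E}_C)$ in the even case (from the analogous duality exchanging $t$ and $-t-2$).

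Substituting the expressions for $c_2(\mathcal{E}_C)$ given by \eqref{cond3}--\eqref{cond4} and for $\alpha$ in terms of $r$ and $d'$ into the two cohomological inequalities produces precisely the bounds (1) and (2) stated in the theorem. To verify sharpness for maximal Tjurina curves, combine the value $\sigma(C) = 2d - r - 3$ from \eqref{sigma2} with the explicit form of $\tau(C)$ in \eqref{tauM}; a direct Chern class computation shows $c_2(\mathcal{E}_C) = \alpha(\alpha+1)$ in the odd case and $c_2(\mathcal{E}_C) = \alpha^2$ in the even case, which are exactly the values at which both inequalities become equalities.

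The main obstacle is invoking Hartshorne's bound in the correct quantitative form, since the naive Castelnuovo--Mumford estimate $h^1(\mathcal{I}_Z(m)) = 0$ for $m \geq \ell(Z) - 1$ is far too crude to yield (1)--(2). One must exploit the Cayley--Bacharach property of $Z$ at level $c_1 + 2\alpha - 3$, which is the geometric content of Hartshorne's Theorem 7.4 in our context. A secondary technical point is the strictly semistable boundary case $r = (d-1)/2$ (only possible for odd $d$), where $\alpha = 0$ and $\mathcal{E}_C$ admits an injection $\mathcal{O} \hookrightarrow \mathcal{E}_C$ with cokernel a twisted ideal sheaf; here $h^1$ is computed directly from the extension and the required inequality is immediate.
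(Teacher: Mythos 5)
Your proposal is correct and follows essentially the same route as the paper: you translate $\sigma(C)$ into the vanishing of $h^1(\E_C(t))$ for the normalized bundle, identify the minimal twist $\alpha$ admitting sections ($\alpha=r-d'$, resp.\ $r-d'+1$), invoke Hartshorne's Theorem 7.4 to obtain $t_1\geq \alpha^2-1-c_2(\E_C)$, resp.\ $t_1\geq \alpha(\alpha-1)-c_2(\E_C)$, substitute the Chern class formulas \eqref{cond3}--\eqref{cond4}, and verify sharpness for maximal Tjurina curves via \eqref{sigma2} and \eqref{tauM} --- exactly the paper's argument, and all your numerical translations check out. Your additional sketch of how Hartshorne's bound is proved (zero scheme of a minimal section, Cayley--Bacharach, Serre duality) is left incomplete at the key quantitative threshold, but this is immaterial since, like the paper, you ultimately rely on the citation of \cite[Theorem 7.4]{Hart}.
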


\begin{proof}
We discuss only the case $d=2d'+1$, the other case being completely similar. One has
$$n(f)_k=h^1(\p^2,\E_C(k-3d')).$$
Moreover $h^0(\p^2,\E_C(t))=h^0(\p^2,E_C(t+d')) \ne 0$ if and only if 
$t+d' \geq r$. Hence the minimal $t$ satisfying this condition is $t_m=r-d' \geq 0$. Then \cite[Theorem 7.4]{Hart} implies that
$n(f)_k=0$ when 
$$k-3d' \leq -c_2(\E)+t_m^2-2.$$
Using the formula for $t_m$ above, and the formula for $c_2(\E)$ given in the equations \eqref{cond3}, we get that $n(f)_k=0$ when 
$$k \leq \tau(C)-2(d')^2-2rd'+r^2+3d'-2,$$
which clearly implies our claim (1). The fact that the inequality in (1) is in fact an equality when $C$ is a maximal Tjurina curve with $r\geq d/2$ follows by a direct computation. Indeed, using the above definition of a maximal Tjurina curve of degree $d=2d'+1$, namely the equality \eqref{tauM}, we see that
$$\tau(C)=2(d')^2+2rd'-r^2-r+d'.$$
Hence
$$\tau(C)-2(d')^2-2rd'+r^2+3d'-1=2d-r-3=\sigma(C),$$
where the last equality follows from \eqref{sigma2}.
\end{proof}

\begin{exm} \label{uninode}
Let $C:f=0$ be a curve of degree $d=2d'+1$, having a unique node as singularities. Then it is known that $r=d-1=2d'$, and $\tau(C)=\sigma(C)=1$. The inequality in Theorem \ref{thmH1} (1) is in this case
$$1 \geq d'(3-2d'),$$
hence the two terms in this inequality can be far apart in some cases.
\end{exm}

\end{document}